 \DeclareMathOperator{\frat}{Frat}
\DeclareMathOperator{\End}{End} \DeclareMathOperator{\h}{{H^1}}
\newcommand{\FF}{\mathbb F}
\newtheorem{thm}{Theorem}
\newtheorem{cor}[thm]{Corollary}
 \newtheorem{lemma}[thm]{Lemma}
\newtheorem{prop}[thm]{Proposition}
\numberwithin{equation}{section}
\renewcommand{\footnote}{\endnote}
\newcommand{\ignore}[1]{}\makeglossary
\begin{document}
\bibliographystyle{amsplain}
\subjclass{20F05}
\title{Invariable generation of prosoluble groups}
\author{Eloisa Detomi and Andrea Lucchini}
\address{
Eloisa Detomi and Andrea Lucchini,\\ Universit\`a degli Studi di Padova,\\  Dipartimento di Matematica,\\ Via Trieste 63, 35121 Padova, Italy}

\begin{abstract} A group $G$ is \emph{invariably generated} by a subset $S$ of $G$
 if   $G=\langle s^{g(s)} \mid s\in S\rangle$  for each choice of $g(s) \in G$, $s \in S$.
Answering two questions posed by Kantor, Lubotzky and Shalev in \cite{ig-infinite}, we prove that
the free prosoluble group of rank $d \ge 2$ cannot be invariably generated by a finite set of elements,
while  the free solvable profinite group of rank $d$ and derived length $l$ is invariably generated by precisely $l(d-1)+1$ elements.
\end{abstract}

\maketitle

\section{Introduction}

Following  \cite{dixon} 
  we say that
a subset $S$ of a group $G$  invariably generates $G$ if   $G=\langle s^{g(s)} \mid s\in S\rangle$  for each choice of $g(s) \in G$, $s \in S$. 
We also say that a group $G$ is invariably generated (IG for short) if $G$ is invariably generated by some subset $S$ of $G$; when $S$ can be chosen to be finite, we say that $G$ is FIG. A group $G$ is IG if and only if it cannot be covered by a union
of conjugates of a proper subgroup, which amount to saying that in every transitive
permutation representation of $G$ on a set with more than one element there is a
fixed-point-free element. Using this characterization, Wiegold \cite{wieg}  proved that the free group on two (or more) letters is not IG.
 Kantor, Lubotzky and Shalev  studied invariable generation in finite and infinite groups. For example in \cite{ig} they proved
that every finite group $G$ is invariably generated by at most $\log_2|G|$ elements. In \cite{ig-infinite} they studied invariable generation of infinite groups, with emphasis on linear groups, proving
that a finitely generated linear group is FIG
 if and only if it is virtually soluble.

\

Let $G$ be a profinite group. Then generation and invariable generation in $G$ are
interpreted topologically.
Just as every finite group is IG, every profinite group G is also IG. Indeed every
proper subgroup of a profinite group $G$ is contained in a maximal open subgroup
$M,$ and, since $M$ has finite index,  $G$ cannot coincide with the union $\cup_{g\in G}M^g.$
On the other hand, finitely generated profinite groups are not necessarily FIG.
In fact by \cite[Proposition 2.5]{ig},  there exist 2-generated finite
groups H with $d_I (H)$ (the minimal number of invariable generators) arbitrarily
large. This implies that the free profinite  of rank $d \ge 2$ is not FIG. 
In \cite{ig-infinite} the following questions are asked: Are finitely generated prosoluble groups FIG?
 Are  finitely generated soluble profinite groups FIG?

We prove that the first question has in general a negative answer:

\begin{thm}\label{main}
The free prosoluble group of rank $d \ge 2$ is not FIG.
\end{thm}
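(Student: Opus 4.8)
The plan is to deduce Theorem~\ref{main} from the existence of finite soluble quotients of the free prosoluble group $F$ of rank $d$ whose invariable generation number is arbitrarily large. Invariable generation passes to continuous quotients: if $S$ invariably generates $F$ and $\pi\colon F\to Q$ is a surjection, then $\pi(S)$ invariably generates $Q$, since any choice of conjugating elements in $Q$ lifts to $F$. Moreover, by the profinite interpretation of generation, a finite set $S$ invariably generates $F$ if and only if $\pi(S)$ invariably generates every finite quotient $Q$ of $F$. Hence, writing $d_I(\cdot)$ for the minimal size of an invariable generating set, a finite invariable generating set of $F$ of size $n$ forces $d_I(Q)\le n$ for every finite soluble $d$-generated quotient $Q$. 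So it suffices to produce such quotients with $d_I$ unbounded.

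The natural source of such quotients is the derived series. For each $l$ let $F^{(l)}$ denote the $l$-th term of the closed derived series of $F$, so that $S_l:=F/F^{(l)}$ is the free profinite soluble group of rank $d$ and derived length $l$; each $S_l$ is a quotient of $F$, and the sharp companion result quoted in the abstract asserts $d_I(S_l)=l(d-1)+1$. For the present theorem only the lower bound is needed, and only its divergence: since $d\ge 2$, we have $l(d-1)+1\to\infty$, so the groups $S_l$ furnish quotients of $F$ with unbounded invariable generation number and Theorem~\ref{main} follows. Thus the entire weight of the argument rests on bounding $d_I(S_l)$ from below. The reason to use the \emph{stacked} family $S_l$ rather than an \emph{ad hoc} finite group is structural: an isolated complemented chief factor is always avoided by a single ``translation'' lying inside it, so the complement classes of a whole complemented socle can be covered at once and contribute essentially nothing to $d_I$; the derived layers are designed precisely to destroy this shortcut.

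To obtain the lower bound I would argue layer by layer, using the characterization recalled in the introduction: $S$ fails to invariably generate $G$ exactly when some maximal subgroup $M$ contains a conjugate of every element of $S$, i.e. $s^G\cap M\neq\emptyset$ for all $s\in S$. Equivalently $S$ invariably generates $G$ if and only if for every conjugacy class of maximal subgroups some $s\in S$ has $s^G\cap M=\emptyset$, so that $d_I(G)$ is the minimal number of elements whose associated ``avoidance sets'' $\{[M]:s^G\cap M=\emptyset\}$ cover all classes of maximal subgroups. The decisive feature of $S_l$ is that each derived quotient $A_i=F^{(i)}/F^{(i+1)}$ is, as a module over the profinite group algebra of the nontrivial quotient $S_i$ acting below it, built from the relation module of the free soluble group and in particular carries no nonzero $S_i$-fixed vectors for $i\ge 1$. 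Since a translation avoids a complement class only when it is fixed by the group acting, freeness from fixed points removes the translation shortcut for every layer but the bottom. I would then show that the complemented chief factors inside each $A_i$ force roughly $d-1$ conjugacy classes of maximal subgroups that cannot be simultaneously covered by fewer than $d-1$ elements, and that covering elements cannot be reused between distinct layers, so the contributions add over the $l$ layers to give $d_I(S_l)\ge l(d-1)+1$.

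The main obstacle is exactly this lower bound, and within it the non-reuse across layers: one must rule out that a cleverly chosen element, whose images in several quotients $S_i$ are simultaneously ``generic'', covers complement classes from many derived layers at once. Controlling this requires understanding the conjugacy classes of $S_l$ projected to each $S_i$ together with the free-module structure of the $A_i$, and showing that the avoidance set of a single element is confined, apart from the top abelianized layer, to a single derived layer. By contrast the matching upper bound $d_I(S_l)\le l(d-1)+1$, needed only for the sharper companion statement and not for Theorem~\ref{main}, should follow by exhibiting an explicit invariable generating set consisting of $d-1$ elements adapted to each layer together with one element mapping onto a generating element of the abelianization.
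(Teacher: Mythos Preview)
Your reduction is sound: if $d_I(S_l)\to\infty$ then $F$ cannot be FIG, and the companion Theorem~\ref{sol} does give this. But you have moved the entire content into the lower bound $d_I(S_l)\ge l(d-1)+1$, and your sketch of that bound has a genuine gap which you yourself identify: the ``non-reuse across layers''. Nothing in your outline prevents a single element from having its avoidance set spread over many derived layers at once; the structural observations you make about the modules $A_i$ (no nonzero fixed vectors, hence no translation shortcut) are reasonable, but they do not by themselves force $d-1$ \emph{new} elements per layer. Your claim that ``the avoidance set of a single element is confined, apart from the top abelianized layer, to a single derived layer'' is exactly what needs proving and is not supported by anything you wrote. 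Turning this heuristic into a proof would require a much finer analysis than you indicate.

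The paper does not go through Theorem~\ref{sol} to prove Theorem~\ref{main}. It uses instead a short bootstrap (Theorem~\ref{2-gen}): for a finite $2$-generated soluble group $G$ with smallest prime divisor $p$, either $d_I(G)\ge p$ already, or the regular wreath product $C_q\wr G$ for a suitable prime $q$ is still $2$-generated, has the same smallest prime, and has strictly larger $d_I$. Iterating inside the class of $2$-generated soluble groups with $\min\pi\ge p$ produces groups with $d_I$ as large as one likes (Corollary~\ref{cor}), and Theorem~\ref{main} follows. The mechanism is not a maximal-subgroup covering argument but the quantitative criterion of Corollary~\ref{matrici}: if $g_1w_1,\dots,g_dw_d$ invariably generate $V^u\rtimes G$ then $u\le\sum_i\dim_{\End_G(V)} C_V(g_i)$. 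Applied to the regular module $\mathbb{F}G$ over a splitting field and summed over its absolutely irreducible constituents, this collapses via the identity $\dim_{\mathbb{F}} C_{\mathbb{F}G}(g)=|G|/|g|$ (Lemma~\ref{centraliser}) to the inequality $1\le\sum_i 1/|g_i|\le d_I(G)/p$, which is what drives $d_I$ upward. The same centraliser-dimension machinery, with more bookkeeping, later yields the exact value in Theorem~\ref{sol}; but for Theorem~\ref{main} only unboundedness is needed, and the paper obtains it without ever computing $d_I(S_l)$.
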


We will deduce Theorem \ref{main} from the following result (see Theorem \ref{2-gen}). Let $G$ be a finite 2-generated soluble group
and let $p$ be the smallest prime divisor of $|G|$. Then either $d_I(G)\geq p$ or there exists a prime $q>p$ such that
$d_I(G)<d_I(C_q\wr G),$ where $C_q \wr G$ is the wreath product with respect to the regular permutation representation of $G$.

\

In contrast, the second question has a positive answer.
  More precisely we can adapt the arguments used in the proof of Theorem \ref{main} to show:

\begin{thm}\label{sol}
Let $F$ be the free soluble profinite group of rank $d$ and derived length $l$.
Then $d_I(F)=l(d-1)+1$.
\end{thm}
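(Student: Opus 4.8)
The plan is to prove both inequalities $d_I(F)\le l(d-1)+1$ and $d_I(F)\ge l(d-1)+1$ by reducing to the finite soluble quotients of $F$ and analysing invariable generation one derived-length layer at a time. The basic tool is the reformulation that a subset $S$ invariably generates a profinite group if and only if for every maximal open subgroup $M$ some $s\in S$ has no conjugate in $M$, specialised to monolithic primitive quotients. Concretely, for a split extension $V\rtimes H$ with $V$ an $H$-module all of whose constituents $S$ satisfy $\h(H,S)=0$, a set $\{v_ih_i\}$ invariably generates if and only if (a) $\{h_i\}$ invariably generates $H$ and (b) for every irreducible quotient $S$ of $V$ the images of the $v_i$ in $S$ do not all lie in $\mathrm{Im}(h_i-1|_S)$; I would first isolate this criterion, since both bounds rest on it. Passing to the inverse limit, $d_I(F)=\sup_Q d_I(Q)$ over the finite soluble quotients $Q$ of rank $\le d$ and derived length $\le l$, the supremum being attained by a compactness argument, so it suffices to bound $d_I(Q)$ uniformly and to exhibit one $Q$ realising the value.

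For the upper bound I would induct on $l$. The case $l=1$ is the free abelian profinite group $\hat\Z^d$, where invariable generation coincides with generation and $d_I=d$. For the inductive step write $A=F^{(l-1)}$ and $\bar F=F/A$, so $\bar F$ is free soluble of derived length $l-1$ and $d_I(\bar F)=(l-1)(d-1)+1$ by induction. Lifting an invariable generating set of $\bar F$ to $F$, I would show that $d-1$ further elements taken inside $A$ suffice to satisfy condition (b) at every chief factor of $A$. The point is that each nontrivial irreducible constituent $V$ occurs in the $V$-homogeneous crown of $F$ with multiplicity at most $d-1$ (the Gasch\"utz bound forced by $d$-generation), so $d-1$ generic ``helper'' elements of $A$, sharing their components across all crowns simultaneously, generate every crown as a module and hence escape all the relevant maximal submodules at once.

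For the lower bound I would construct a single quotient $Q$ of derived length $l$ realising the value. At the top put $\FF_p^d$ with $p$ large, and at each layer $j=2,\dots,l$ adjoin, at maximal crown rank $d-1$, a rich supply of nontrivial irreducible modules for the group $H_{j-1}$ built so far, over a large residue characteristic. The key mechanism is that a generator of $Q$ is of use at a given layer only on modules on which it has a fixed point, and a counting argument over the many available modules shows that, for $p$ large, no invariably generating set $S$ can cover a layer's crowns using elements whose image in $H_{j-1}$ is nontrivial; hence at least $d-1$ elements of $S$ must be trivial on $H_{j-1}$, that is, first appear at layer $j$. Since an element is trivial on $H_{j-1}$ and has nonzero component in layer $j$ for exactly one value of $j$ --- its layer of first appearance --- these contributions are disjoint across layers, and together with the $d$ elements needed to generate the top layer this yields $|S|\ge d+(l-1)(d-1)=l(d-1)+1$.

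I expect the main obstacle to be the control of the crowns, on two fronts. For the upper bound the delicate point is that the multiplicity bound must really be $d-1$ for every nontrivial chief factor of the lower layers, which requires ruling out, or absorbing into the top layer, constituents $V$ with $\h(H,V)\ne 0$, whose crowns a priori carry extra copies and would force additional invariable generators; showing that the free soluble module structure keeps these under control is the crux. For the lower bound the technical heart is the counting estimate guaranteeing, uniformly over all candidate generating sets, the existence of modules on which every ``already committed'' generator acts without fixed points, which is exactly what pins the increment at each layer to $d-1$.
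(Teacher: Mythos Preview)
Your upper bound is essentially the paper's argument (Proposition~\ref{le}): induct on $l$, let $A$ be the last nontrivial derived term, and observe that a lift of an invariable generating set of $G/A$ together with $d-1$ module generators of $A$ invariably generates $G$. The worry you raise about $\h(H,V)\neq 0$ is misplaced here: the bound $d_G(A)\le d-1$ needs only Gasch\"utz's formula $d(G)\ge \theta_G(V)+\lceil \delta_G(V)/r_G(V)\rceil$ together with the observation that every constituent of $A$, lying inside $G'$, carries a nontrivial action (so $\theta=1$). No cohomological hypothesis is required, and for the lower bound the vanishing of $\h$ is in any case automatic for nontrivial irreducible modules over nontrivial soluble groups (Gasch\"utz).

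The lower bound, however, has a genuine gap in the disjointness step. You write that the counting forces at least $d-1$ elements of $S$ to be ``trivial on $H_{j-1}$, that is, first appear at layer $j$''. But ``trivial on $H_{j-1}$'' means first appearance $\ge j$, not $=j$, and the fixed--point inequality $d-1\le \sum_i 1/|g_i|$ does \emph{not} force $d-1$ of the $g_i$ to be trivial: writing $k$ for the number of trivial $g_i$ one only gets $k\ge \bigl(p(d-1)-t\bigr)/(p-1)$, which is $\ge d-1$ only when $t\le d-1$. So the same handful of elements could in principle be counted at every layer, and the layerwise contributions are not disjoint.

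The paper repairs this by arguing inductively rather than globally. It constructs $G_{l+1}=(\FF_qG_l)^{d-1}\rtimes G_l$ for a prime $q\equiv 1$ modulo the exponent of $G_l$, so that $\FF_qG_l$ splits absolutely and the key identity $\dim_{\FF_q}C_{\FF_qG_l}(g)=|G_l|/|g|$ holds. If $w_1g_1,\dots,w_tg_t$ invariably generate $G_{l+1}$ then $g_1,\dots,g_t$ invariably generate $G_l$; since $d_I(G_l)=l(d-1)+1$ by induction, at least that many of the $g_i$ are nontrivial. Each such $g_i$ contributes at most $1/p$ to $\sum_i 1/|g_i|$ (where $p=\min\pi(G_l)$), while the remaining $t-l(d-1)-1$ contribute at most $1$, whence
\[
d-1\;\le\;\frac{l(d-1)+1}{p}\;+\;t-l(d-1)-1,
\]
and for $p>l(d-1)+1$ this gives $t\ge (l+1)(d-1)+1$. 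The point is that the inductive hypothesis $d_I(G_l)=l(d-1)+1$ is what pins down the number of \emph{nontrivial} $g_i$ and replaces your disjointness bookkeeping.
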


Denote by $d(G)$ the smallest cardinality of a generating set of a finitely generate profinite group $G$.
Clearly if $G$ is pronilpotent,  then $d(G)=d_I(G)$. More precisely, by \cite[Proposition 2.4]{ig} a finitely generated profinite
group $G$ is pronilpotent if and only if every generating set of $G$ invariably generates $G.$
But what can we say about the difference $d_I(G)-d(G)$ when $G$ is a prosupersoluble group?
In this case $G/\frat(G)$ is metabelian, so Theorem \ref{sol} implies that $d_I(G)-d(G)\leq d(G)-1$.
 Although supersolubility is a quite strong property
and in particular a metabelian group is not in general supersoluble, the previous estimate is sharp.

\begin{thm}\label{pro-supersoluble}
Let $F$ be the free prosupersoluble group of rank $d$. Then $d_I(F)=2d-1$.
\end{thm}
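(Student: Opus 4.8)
The plan is to handle the two inequalities separately: the upper bound $d_I(F)\le 2d-1$ comes directly from Theorem \ref{sol}, while $d_I(F)\ge 2d-1$ is forced by exhibiting a single finite supersoluble quotient of $F$ on which every invariable generating set has size $2d-1$. For the upper bound I would first record two facts. Firstly, $d_I(G)=d_I(G/\frat(G))$ for every profinite $G$: if a set invariably generates modulo $\frat(G)$, then every choice of conjugates generates $G$ modulo $\frat(G)$ and hence generates $G$ by the Frattini argument, and the reverse inequality is trivial. Secondly, a Frattini-free supersoluble group is metabelian: when $\frat(G)=1$ the Fitting subgroup coincides with the socle and is a direct product of minimal normal subgroups, each of prime order since $G$ is supersoluble, so $\fit(G)$ is abelian and self-centralizing and $G/\fit(G)$ embeds in $\prod_i\aut(C_{p_i})$, a product of cyclic groups, hence is abelian. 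Applying both to $F$, the quotient $F/\frat(F)$ is metabelian of rank $d$, thus a quotient of the free metabelian profinite group of rank $d$, and Theorem \ref{sol} with $l=2$ gives $d_I(F)=d_I(F/\frat(F))\le 2d-1$.

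For the lower bound the plan is to build $Q=V\rtimes A$ with $A=\FF_\ell^{\,d}$ for a large prime $\ell$, and $V=\bigoplus_{j=1}^{k}U_j$ with $U_j\cong\FF_p^{\,d-1}$ and $p\equiv 1\pmod\ell$, where $A$ acts on $U_j$ through the scalar character $\chi_j$ whose kernel $H_j$ is a hyperplane, the $H_1,\dots,H_k$ being $k=d(d-1)+1$ hyperplanes in general position (so that any $d$ of the associated normals are independent). Because $A$ acts by scalars, each $U_j$ is a direct sum of $1$-dimensional $A$-modules, every chief factor is cyclic of prime order, and $Q$ is supersoluble. A relation-module computation then shows $d(Q)=d$: from $N$ elements the commutators and $\ell$-th powers span at most an $(N-1)$-dimensional subspace of each block, so a block of multiplicity $d-1$ costs $\max(d,(d-1)+1)=d$ generators, and the pairwise non-isomorphic blocks impose independent conditions. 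Thus $Q$ is a $d$-generated supersoluble group and hence a quotient of $F$.

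The heart of the matter is the computation of $d_I(Q)$. Here I would use the criterion that $S$ invariably generates iff $S\not\subseteq\bigcup_g M^g$ for every maximal subgroup $M$, together with the observation that conjugation fixes the $A$-component (as $A$ is abelian) and on the block $U_j$ moves $(w_s)_{U_j}$ within its coset of $(\chi_j(a_s)-1)U_j$, which is all of $U_j$ when $a_s\notin H_j$ and trivial when $a_s\in H_j$. The maximal subgroups then produce exactly two families of conditions: (i) the projections $\{a_s\}$ span $\FF_\ell^{\,d}$, and (ii) for each $j$ the set $\{(w_s)_{U_j}:a_s\in H_j\}$ spans $U_j$, i.e. at least $d-1$ of the $a_s$ lie in $H_j$. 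I would then argue that no $2d-2$ elements can satisfy (i) and (ii): at most $d-2$ of them can have trivial $A$-part (spanning $A$ needs $d$ with nontrivial part), each nontrivial $a_s$ lies in at most $d-1$ of the $H_j$ by general position, and double-counting incidences against the demand that every one of the $k=d(d-1)+1$ hyperplanes collect $d-1$ incidences yields a contradiction (the tightest case being $d-2$ trivial parts). Hence $d_I(Q)\ge 2d-1$, and combined with the upper bound $d_I(Q)=2d-1$, so $d_I(F)\ge 2d-1$.

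I expect the principal obstacle to be precisely this determination of $d_I(Q)$, where two design choices are both indispensable and easy to get wrong: the multiplicity must be exactly $d-1$ and the hyperplanes must be in general position. With multiplicity $1$ a single element with trivial $A$-part satisfies all of (ii) at once, and with hyperplanes in special position (for instance coordinate hyperplanes) the spanning elements themselves already meet every $H_j$; in either case $d_I$ collapses to $d+1$, which is why the naive constructions fail for $d\ge 3$. Keeping the multiplicity at $d-1$ so that covering genuinely costs $d-1$ extra elements while at the same time holding $d(Q)=d$ (so that $Q$ remains a quotient of the rank-$d$ free group) is the delicate balance that the argument must strike.
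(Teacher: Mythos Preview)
Your upper bound is exactly the paper's: reduce to $F/\frat(F)$, observe this is metabelian (the paper cites \cite[Proposition 3.3]{ribesprosupersolv} where you argue it directly via $\fit(G)$), and apply the $l=2$ case of the derived-length bound.

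For the lower bound your construction is correct but genuinely different from the paper's Proposition~\ref{geq}. The paper takes $K=C_2^{\,d}$, $p=3$, and uses \emph{all} $2^d-1$ hyperplanes, each carrying a block of multiplicity $d-1$; given any invariable generating set it reorders so that $k_1,\dots,k_d$ form a basis of $K$, then exhibits the single hyperplane $M_j=\langle k_1^{-1}k_2,\dots,k_{d-1}^{-1}k_d\rangle$ that misses every $k_i$, whence Corollary~\ref{matrici} on that block alone forces $d-1\le r-d$. You instead take $A=\FF_\ell^{\,d}$ for a large prime $\ell$, only $k=d(d-1)+1$ hyperplanes in general position, and run an incidence double count (trivial $A$-parts hit every hyperplane, nontrivial ones hit at most $d-1$, and the demand that each hyperplane receive $d-1$ hits is off by one when $r=2d-2$). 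Both routes arrive at $d_I\ge 2d-1$; the paper's is shorter and works over the smallest primes with a one-line pigeonhole, while yours uses only polynomially many blocks and isolates a reusable counting mechanism at the cost of needing $\ell$ large enough to admit $d(d-1)+1$ hyperplanes in general position. One small point: your condition (ii) as written (``spans $U_j$'') is only the \emph{necessary} direction, but that is all the lower bound needs, and it is exactly what Corollary~\ref{matrici} delivers.
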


\section{Preliminaries}

A profinite group is a topological group that is isomorphic
to an inverse limit of finite groups.  The textbooks \cite{ribes-zal}  and \cite{book:wilson} provide a good introduction to
the theory of profinite groups.  In the context of profinite groups,   generation and invariable generation  are interpreted topologically. 
 By a standard argument (see e.g. \cite[Proposition 4.2.1]{book:wilson}) it can be proved that a profinite group $G$
  is invariably generated by $d$ elements if and only if $G/N$ is invariably generated by $d$ elements for every open normal subgroup $N$ of $G$.  Therefore in the following we will mainly work on finite groups.

If $G$ is a finite soluble group, the minimal number of generators for $G$ can be computed in term of the structure of $G$-modules of the chief factors of $G$ with the following formula due to Gasch\"utz 
 \cite{g2}. 
\begin{prop}\label{gen}
Let $G$ be a finite soluble group. For every irreducible $G$-module $V$ define
$r_G(V)=\dim_{\End_{G}(V)}V$,
   set $\theta_G(V)=0$  if $V$ is a trivial $G$-module,  and $\theta_G(V) = 1$ otherwise,
   and let $\delta_G(V)$ be the number of  chief factors $G$-isomorphic to $V$ and complemented in an arbitrary chief series of $G$.
 Then
 $$d(G)= \max_{V} \left( \theta_G(V) +\left\lceil \frac{\delta_G(V) }{r_G(V)}\right\rceil \right)$$
where  $V$ ranges over the set of non $G$-isomorphic complemented chief factors of $G$ and $\lceil x \rceil$ denotes the smallest integer greater or equal to $x$.
\end{prop}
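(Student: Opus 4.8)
The plan is to prove the formula by induction on the chief length of $G$, removing one chief factor at a time and controlling how $d(G)$ changes; this is the bookkeeping that underlies Gasch\"utz's computation of the Eulerian function in \cite{g2}. First I would record two invariance reductions. Since generating sets lift modulo the Frattini subgroup, $d(G)=d(G/\frat(G))$, and the chief factors lying inside $\frat(G)$ are precisely the non-complemented ones, so none of the numbers $\delta_G(V)$, $r_G(V)$, $\theta_G(V)$ is affected by passing to $G/\frat(G)$. Hence both sides of the asserted equality are invariant, and at each inductive step I may take a minimal normal subgroup $N$ and discard it harmlessly whenever $N\le\frat(G)$. So I assume $N$ is complemented, write $\og=G/N$, and let $V$ be the (elementary abelian) $G$-module afforded by $N$.

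Comparing the invariants of $G$ and $\og$: for every module $W\not\cong V$ the three numbers coincide, while $r_G(V)$ and $\theta_G(V)$ are unchanged (as $N$, being abelian, centralizes $V$, so $V$ is already a faithful-enough $\og$-module) and $\delta_G(V)=\delta_{\og}(V)+1$, using the Jordan--H\"older invariance of the complemented-factor count and the fact that complementation of the upper factors is preserved under $G\to\og$. Consequently the right-hand side of the formula for $G$ equals $\max\{\mathrm{RHS}(\og),\,\theta_G(V)+\lceil\delta_G(V)/r_G(V)\rceil\}$, and by the inductive hypothesis $\mathrm{RHS}(\og)=d(\og)$. Thus the whole theorem reduces to the single claim
\[
 d(G)=\max\Bigl\{\, d(\og),\ \theta_G(V)+\bigl\lceil \delta_G(V)/r_G(V)\bigr\rceil \,\Bigr\}.
\]

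The heart is a counting argument. I would fix a generating $d$-tuple of $\og$ and note that its preimages in $G$ form a coset of $N^{d}=V^{d}$. Because $N$ is a minimal normal subgroup and each preimage $\mathbf g$ satisfies $\langle\mathbf g\rangle N=G$, the subgroup $\langle\mathbf g\rangle$ is either all of $G$ or a complement of $N$; moreover each complement of $N$ contains exactly one such preimage and is generated by it. Hence exactly $|V|^{d}-\kappa$ preimages generate $G$, where $\kappa$ is the number of complements of $N$ in $G$. Since $d\ge d(\og)$ already guarantees a generating $d$-tuple of $\og$, it follows that $G$ is $d$-generated if and only if $|V|^{d}>\kappa$, so $d(G)=\max\{d(\og),\mu\}$ with $\mu$ the least $d$ satisfying $|V|^{d}>\kappa$. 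As the extension splits, $\kappa=|Z^1(\og,V)|=\tfrac{|V|}{|C_V(\og)|}\,|\h(\og,V)|$.

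It remains to identify $\mu$ with $\theta_G(V)+\lceil\delta_G(V)/r_G(V)\rceil$, and this is where I expect the real work. For a nontrivial module $C_V(\og)=0$, so $\kappa=|V|\cdot|\h(\og,V)|$; feeding this into $|V|^{d}>\kappa$ and using $\lceil(a+1)/r\rceil=\lfloor a/r\rfloor+1$ turns the desired equality $\mu=1+\lceil\delta_G(V)/r_G(V)\rceil$ into the cohomological identity $\dim_{\End_G V}\h(\og,V)=\delta_{\og}(V)$; for a trivial module $C_V(\og)=V$ and $\h(\og,V)=\operatorname{Hom}(\og,V)$, and the analogous identity $\dim_{\End_G V}\operatorname{Hom}(\og,V)=\delta_{\og}(V)$ gives $\mu=\delta_G(V)$ with $\theta_G(V)=0$. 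The main obstacle is therefore establishing that the $\End_G(V)$-dimension of $H^{1}(\og,V)$ equals the number of complemented chief factors of $\og$ of type $V$. The delicacy is that $H^{1}$ can a priori detect copies of $V$ buried in $\frat(\og)$, so one cannot simply re-reduce to the Frattini-free case inside the induction; the cleanest route is to argue this equality through the crown associated to $V$, where the $V$-homogeneous socle of the appropriate quotient has length exactly $\delta_G(V)$, again invoking the invariance of $\delta_G(V)$ under change of chief series.
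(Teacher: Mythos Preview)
The paper does not prove this proposition; it is quoted without proof as a known formula due to Gasch\"utz \cite{g2} and used only as a preliminary tool in later sections. There is therefore no argument in the paper against which to compare your proposal.

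For the record, your outline follows the standard route to Gasch\"utz's formula: reduce modulo $\frat(G)$, peel off a complemented minimal normal subgroup $N\cong V$, count lifts of a generating $d$-tuple of $\og=G/N$ via the bijection between non-generating lifts and complements of $N$ (hence with $Z^{1}(\og,V)$), and thereby reduce everything to the cohomological identity $\dim_{\End_G V}H^{1}(\og,V)=\delta_{\og}(V)$. You have correctly isolated that last identity as the genuine content and the point where the Frattini subtlety bites; it is exactly what Gasch\"utz's computation of the Eulerian function in \cite{g2} establishes, and in modern treatments it is packaged via the $V$-crown of $G$.
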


There is no similar formula for the minimal size of the invariable generating sets.
 The best result in this direction is  a criterion we gave in   \cite{PCIG} to decide whether  an  invariable generating set of a group $G$ can be lifted   to an extension over an abelian normal subgroup.
   To formulate this result, we need to recall some notation from \cite{PCIG}.

 Let $G$ be a  finite group acting irreducibly on an elementary abelian finite $p$-group $V$.
For a positive integer $u$ we consider the semidirect product $V^u \rtimes G$: unless otherwise stated, 
 we assume that  the action of $G$ is diagonal on $V^u$, that is, $G$ acts in the same way on each of the $u$ direct factors.
In \cite[Proposition 8]{PCIG} we proved the following. 

\begin{prop}\label{matrici1}
Suppose $G$ acts faithfully and irreducibly on $V$ and $\h(G,V)=0$.
Assume that $g_1,\dots,g_d$ invariably generate $G$.
  There exist some elements $w_1,\dots,w_d\in V^u$ such that $g_1w_1, g_2w_2,\dots,g_d w_d$ invariably generate
  $V^u\rtimes G$ if and only if
$$u\leq \sum_{i=1}^d  \dim_{\End_G(V)}C_V(g_i).$$
\end{prop}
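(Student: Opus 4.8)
The plan is to reduce invariable generation of $M:=V^u\rtimes G$ by the lifts $g_iw_i$ to a single linear-algebra condition over the field $E:=\End_G(V)$, and then to read off exactly when that condition can be met. Throughout I write each lift as an element $(a_i,g_i)\in M$ with $a_i\in V^u$; as the $w_i$ range over $V^u$, so do the $a_i$, so it is equivalent to ask for suitable $a_i$. I model $V^u$ as $V\otimes_E E^u$ with the diagonal $G$-action $g(v\otimes x)=gv\otimes x$, and $(b,h)$ will denote an arbitrary conjugating element.

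First I would reduce testing invariable generation to one irreducible quotient at a time. Since $g_1,\dots,g_d$ invariably generate $G$, for every choice of conjugating elements the $G$-components of the conjugates are $g_i^{h_i}$ and still generate $G$; hence the subgroup $H$ they generate always satisfies $HV^u=M$, so $U:=H\cap V^u$ is a $G$-submodule and $H=M$ iff $U=V^u$. Thus $H\ne M$ iff $H\cap V^u$ is contained in some maximal $G$-submodule $U$, equivalently iff the images of the conjugates in $M/U$ lie in a common complement of $V^u/U$. Here the hypotheses are essential: $V^u$ is isotypic of type $V$ and $\h(G,V)=0$, which by the long exact sequence forces $\h(G,W)=0$ for every section $W$ of $V^u$; in particular $V^u/U\cong V$ and all complements of $V^u/U$ in $M/U$ are conjugate, so "lying in a common complement'' is the correct failure condition.

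Next comes the computation that makes the conjugating elements cancel. Conjugating $(a_i,g_i)$ by $(b_i,h_i)$ gives a $V^u$-component $h_i^{-1}a_i+(g_i^{h_i}-1)(h_i^{-1}b_i)$. Asking that all these images lie in one complement of $V^u/U$ amounts to a single $t\in V^u/U$ with $\overline{h_i^{-1}a_i}+(g_i^{h_i}-1)\overline{h_i^{-1}b_i}=(g_i^{h_i}-1)t$ for all $i$. Because the conjugating elements $b_i$ are free and independent, the element $t-\overline{h_i^{-1}b_i}$ is an unconstrained $s_i\in V^u/U$ for each $i$, so the system is solvable iff $\overline{h_i^{-1}a_i}\in(g_i^{h_i}-1)(V^u/U)$ for every $i$. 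Since the action is diagonal, $(g_i^{h_i}-1)(V^u/U)=h_i^{-1}(g_i-1)(V^u/U)$, and applying $h_i$ collapses the condition to $\bar a_i\in(g_i-1)(V^u/U)$, with the conjugating element gone entirely. Identifying $V^u/U\cong V$ through the projection $\rho_\lambda=\mathrm{id}_V\otimes\lambda$ attached to a nonzero functional $\lambda\in(E^u)^*$, this reads $\rho_\lambda(a_i)\in(g_i-1)V$. Combining the two reductions: the lifts invariably generate $M$ iff for every nonzero $\lambda\in(E^u)^*$ there is an index $i$ with $\rho_\lambda(a_i)\notin(g_i-1)V$.

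The conclusion is then a dimension count. Put $I_i:=(g_i-1)V$ and $\pi_i\colon V\to V/I_i$, and use $V^u\cong\mathrm{Hom}_E((E^u)^*,V)$ to regard $a_i$ as the map $\lambda\mapsto\rho_\lambda(a_i)$. The criterion above says precisely that the $E$-linear map $\Phi\colon(E^u)^*\to\bigoplus_i V/I_i$, $\lambda\mapsto(\pi_i\rho_\lambda(a_i))_i$, is injective. As the $a_i$ vary over $V^u$ and each $\pi_i$ is onto, the composites $\pi_i\rho_\lambda(a_i)$ vary over all of $\mathrm{Hom}_E((E^u)^*,V/I_i)$, so $\Phi$ can be chosen to be an arbitrary element of $\mathrm{Hom}_E((E^u)^*,\bigoplus_i V/I_i)$. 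Such a map is injective for some choice iff $u=\dim_E(E^u)^*\le\sum_i\dim_E V/I_i$, and $\dim_E V/I_i=\dim_E C_V(g_i)$; this is exactly the asserted inequality. The \emph{main obstacle} is the reduction of the first two paragraphs: one must justify that invariable generation localizes to each irreducible quotient (this is where $\h(G,V)=0$, hence conjugacy of complements, and the isotypicity of $V^u$ are used) and carry out the bookkeeping that lets the free parameters $b_i$ absorb the common translation $t$ and the diagonal action move $g_i^{h_i}-1$ back to $g_i-1$. Once the condition is in the clean form $\rho_\lambda(a_i)\in(g_i-1)V$, the remaining count is routine.
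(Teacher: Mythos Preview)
The paper does not actually prove this proposition: it is quoted verbatim from \cite[Proposition~8]{PCIG} and no argument is given here. So there is no ``paper's own proof'' to compare your attempt against.

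That said, your argument is correct and self-contained. The key reductions are all sound: since the $g_i$ invariably generate $G$, any subgroup $H$ generated by conjugates of the lifts surjects onto $G$, so $H\cap V^u$ is a $G$-submodule and $H=M$ iff this intersection is all of $V^u$; failure therefore localises at some irreducible quotient $V^u/W\cong V$, where $\h(G,V)=0$ guarantees that all complements are conjugate and hence that ``the images lie in one complement'' is exactly the obstruction. Your conjugation bookkeeping is accurate (the formula $(b,h)^{-1}(a,g)(b,h)=\bigl(h^{-1}a+(g^{h}-1)(h^{-1}b),\,g^{h}\bigr)$ holds), and the absorption of the free parameters $b_i$ and $t$ into a single unconstrained $s_i$ is legitimate because the $b_i$ range independently. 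The cancellation of $h_i$ via $h_i\cdot(g_i^{h_i}-1)(V^u/W)=(g_i-1)(V^u/W)$ is a general module identity and needs no diagonality. Finally, the linear-algebra count is clean: parametrising maximal $G$-submodules by nonzero $\lambda\in(E^u)^*$ (legitimate since $\operatorname{Hom}_G(V^u,V)\cong(E^u)^*$, and $E$ is a finite field by Wedderburn), the criterion becomes injectivity of $\Phi\colon(E^u)^*\to\bigoplus_i V/(g_i-1)V$, and since the $a_i$ are independent one can realise any such $\Phi$; injectivity is then possible iff $u\le\sum_i\dim_E V/(g_i-1)V=\sum_i\dim_E C_V(g_i)$ by rank--nullity.

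One cosmetic remark: the phrase ``since the action is diagonal'' before the identity $(g_i^{h_i}-1)(V^u/W)=h_i^{-1}(g_i-1)(V^u/W)$ is unnecessary---this holds in any $G$-module---but it does no harm.
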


The assumption  $\h(G,V)=0$ in the case of soluble groups is assured by the following unpublished result by Gasch\"utz (see  \cite[Lemma 1]{st}).

\begin{lemma}\label{H1}
 Let $G\neq 1$ be a finite soluble group and let $V$ be an irreducible $G$-module. Then   $\h(G,V)=0$.
\end{lemma}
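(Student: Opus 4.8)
The plan is to run the standard coprime-action argument. Write $p$ for the characteristic of $V$, so that $V$ is a nonzero $\FF_p$-module and $\h(G,V)$ is an elementary abelian $p$-group annihilated by $|G|$; in particular, if $p\nmid|G|$ then $\h(G,V)=0$ and there is nothing to prove. I would therefore assume $p\mid|G|$. The argument uses that $V$ is faithful, which is exactly the situation in which the lemma is applied (Proposition \ref{matrici1}); faithfulness will be the decisive hypothesis, since without it the conclusion can fail.

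First I would show $\oo_p(G)=1$. The subgroup $\oo_p(G)$ is a normal $p$-subgroup acting on the nonzero $\FF_p$-space $V$, so its fixed-point space $C_V(\oo_p(G))$ is nonzero; being $G$-invariant (as $\oo_p(G)\trianglelefteq G$) and $V$ being irreducible, it must equal $V$. Thus $\oo_p(G)$ acts trivially on $V$, and by faithfulness $\oo_p(G)\le C_G(V)=1$. Now set $N:=\oo_{p'}(G)$. Since $\oo_p(G)=1$ we have $N=\fit(G)$, and this is nontrivial because $G\neq 1$ is soluble. As $N\neq 1=C_G(V)$, the subgroup $N$ does not act trivially, so $C_V(N)$ is a proper $G$-submodule of $V$; by irreducibility $C_V(N)=V^N=0$.

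Finally, $N$ has order prime to $p$ while $V$ is a $p$-group, so $\h(N,V)=0$ by coprime-action cohomology vanishing. The inflation--restriction exact sequence for $1\to N\to G\to G/N\to 1$ gives
\[
0\to \h(G/N,V^N)\to \h(G,V)\to \h(N,V)^{G/N},
\]
and both outer terms vanish, since $V^N=0$ and $\h(N,V)=0$; hence $\h(G,V)=0$. The genuinely delicate point, and the main obstacle, is the structural reduction to $C_V(\oo_{p'}(G))=0$: it is precisely here that faithfulness enters, and it cannot be omitted (for instance the one-dimensional sign module of $S_3$ over $\FF_3$ has nonzero $\h$). Once $\oo_p(G)=1$ and $V^N=0$ are established, the cohomological conclusion is immediate.
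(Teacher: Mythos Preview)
The paper does not actually prove this lemma; it records it as an unpublished result of Gasch\"utz with a pointer to \cite[Lemma~1]{st}. Your argument is the standard one and is essentially correct. Two remarks are in order.

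First, you are right that faithfulness is necessary and that the statement as printed is false without it: for the sign module $V$ of $S_3$ over $\FF_3$ one checks directly that $\h(S_3,V)\cong\FF_3$. This causes no trouble for the paper, since the lemma is only invoked through Corollary~\ref{matrici}, where one first replaces $G$ by $G/C_G(V)$; so only the faithful version is ever needed, exactly as you say.

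Second, a small slip that does not affect the argument: from $\oo_p(G)=1$ you conclude $N:=\oo_{p'}(G)=\fit(G)$, but in general only the inclusion $\fit(G)\le \oo_{p'}(G)$ holds. Your proof, however, uses merely that $N$ is a nontrivial normal $p'$-subgroup, and $N\supseteq\fit(G)\neq 1$ already gives this. Taking $N=\fit(G)$ directly would work equally well and makes the claim literally true.
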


In the following we will use this straightforward consequence of Proposition \ref{matrici1}.

\begin{cor}\label{matrici}
 Let $G\neq 1$ be a finite soluble group and let $V$ be an irreducible $G$-module.
Assume that
 $x_1, \ldots , x_d$   invariably generate $V^u\rtimes G$, where
 $x_i=v_i g_i$ with  $v_i \in V^u$ and $g_i \in G$.
 Then
   $g_1,\dots,g_d $   invariably generate $G$
and
$$u\leq \sum_{i=1}^d \dim_{\End_{G/C_G(V)}(V)}C_V(g_i).$$
\end{cor}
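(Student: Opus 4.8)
The plan is to reduce to the faithful case already handled by Proposition \ref{matrici1}, by factoring out the kernel of the action, and then to read off both conclusions from the behaviour of invariable generation under quotients. I would first record the general principle that invariable generation passes to quotients: if $S$ invariably generates a group $H$ and $N\trianglelefteq H$, then the image of $S$ invariably generates $H/N$ (lift any prescribed conjugators from $H/N$ to $H$, use that the lifts generate $H$, and project). Applying this with $H=V^u\rtimes G$ and $N=V^u$, whose quotient is $G$ and which sends $x_i=v_ig_i\mapsto g_i$, immediately gives that $g_1,\dots,g_d$ invariably generate $G$; this settles the first assertion.

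For the inequality, set $C=C_G(V)$ and $\bar G=G/C$, so that $\bar G$ acts faithfully and irreducibly on $V$. Since each $c\in C$ acts trivially on $V$ and hence on $V^u$, conjugation by $c$ fixes every $v\in V^u$, i.e.\ $cv=vc$; thus $C$ is a normal subgroup of $V^u\rtimes G$ that meets $V^u$ trivially and centralises it, whence
$$(V^u\rtimes G)/C\cong V^u\rtimes\bar G,$$
with $\bar G$ acting faithfully, irreducibly and diagonally on $V^u$. Projecting the $x_i$ into this quotient yields, by the quotient principle again, an invariable generating set $v_1\bar g_1,\dots,v_d\bar g_d$ of $V^u\rtimes\bar G$; projecting once more modulo $V^u$ shows that $\bar g_1,\dots,\bar g_d$ invariably generate $\bar G$.

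Now I would invoke Proposition \ref{matrici1} for the faithful irreducible action of $\bar G$ on $V$, using Lemma \ref{H1} to supply $\h(\bar G,V)=0$ (here $\bar G$ is a non-trivial soluble group; the degenerate case $\bar G=1$ forces $V\cong C_p$ and $V^u\rtimes\bar G=V^u$ abelian, where invariable generation coincides with generation and the bound reduces to $u\le d$, which holds at once). Writing $v_i\bar g_i=\bar g_i w_i$ with $w_i=v_i^{\bar g_i}\in V^u$ exhibits elements $w_i$ for which $\bar g_1w_1,\dots,\bar g_dw_d$ invariably generate $V^u\rtimes\bar G$, so Proposition \ref{matrici1} forces
$$u\le\sum_{i=1}^d\dim_{\End_{\bar G}(V)}C_V(\bar g_i).$$
Because the action of $g_i$ on $V$ factors through $\bar g_i$, we have $C_V(\bar g_i)=C_V(g_i)$ and $\End_{\bar G}(V)=\End_{G/C_G(V)}(V)$, and substituting gives exactly the claimed bound.

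The only genuinely delicate point is the reduction to a faithful action: one must check that $C_G(V)$ both normalises and centralises the $V^u$ part, so that the quotient is again a semidirect product of the required diagonal shape and Proposition \ref{matrici1} applies verbatim. Everything else is a direct transcription through the two quotient maps.
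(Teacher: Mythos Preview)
Your proof is correct and follows essentially the same route as the paper: pass to the quotient by $C_G(V)$ to obtain a faithful action, then invoke Proposition~\ref{matrici1} together with Lemma~\ref{H1}, and finally identify $C_V(\bar g_i)=C_V(g_i)$. You simply spell out details the paper leaves implicit (why the quotient is again $V^u\rtimes\bar G$, the rewriting $v_i\bar g_i=\bar g_i w_i$, and the degenerate case $\bar G=1$).
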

\begin{proof}
Clearly, $g_1,\dots,g_d $ invariably generate $G$.  Denote by $\overline{g}_i$
  the image of $g_i$ in the quotient group $G/C_G(V)$. By Lemma \ref{H1} and
   Proposition \ref{matrici1}   we have
  $$u\leq \sum_{i=1}^d \dim_{\End_{G/C_G(V)}(V)}C_V(\overline{g}_i).$$
  Since $\dim C_V(\overline{g}_i)= \dim C_V(g_i)$, the result follows.
\end{proof}

\section{Proof of Theorem \ref{main}}

 If $G$ is a finite group, $\pi(G)$ is the set of primes dividing the order of $G$.

\begin{thm}\label{2-gen} Let $G$ be a 2-generated finite soluble group.
Either $d_I(G)\geq \min{\pi(G)}$ or there exists a finite soluble group $H$ having $G$ as an epimorphic image and such that
\begin{itemize}
\item $d(H)=2$;
\item $d_I(H)>d_I(G);$
\item $\min\pi(H)=\min \pi(G).$
\end{itemize}
\end{thm}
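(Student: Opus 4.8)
The plan is to prove the dichotomy directly. Set $p=\min\pi(G)$ and $d=d_I(G)$, and assume $d<p$, since otherwise the first alternative already holds. I would then take $H=C_q\wr G=\FF_q[G]\rtimes G$, the regular wreath product, for a prime $q$ chosen with $\exp(G)\mid q-1$ and $q>|G|$; such $q$ exist by Dirichlet. Since $q>|G|\ge p$, we have $\pi(H)=\pi(G)\cup\{q\}$ and $\min\pi(H)=p$, which settles the third bullet. For the first bullet, $d(H)=2$ follows from Gasch\"utz's formula (Proposition \ref{gen}): because $\exp(G)\mid q-1$, the field $\FF_q$ is a splitting field for $G$, so each nontrivial irreducible $\FF_q G$-module $U$ occurs in the regular module $\FF_q[G]$ with multiplicity $r_G(U)$; as $q\nmid|G|$, Lemma \ref{H1} shows these factors are complemented and distinct from the chief factors of the top quotient $G$, so each such $U$ contributes $\theta_G(U)+\lceil r_G(U)/r_G(U)\rceil=2$, the trivial module contributes $1$, and the chief factors of $G$ contribute $d(G)\le 2$; since $H$ is nonabelian this gives $d(H)=2$ exactly.

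The remaining, and main, point is $d_I(H)>d$. I would argue by contradiction: suppose $x_1,\dots,x_d$ invariably generate $H$, and write $x_i=v_ig_i$ with $v_i\in\FF_q[G]$ and $g_i\in G$. Passing to the quotient $H/\FF_q[G]=G$ shows that $g_1,\dots,g_d$ invariably generate $G$; by minimality of $d=d_I(G)$ no $g_i$ can be trivial, so $o(g_i)\ge p$ for every $i$ — this is the one place where the smallest prime $p$ enters, and it is the crux of the argument. Next, for each nontrivial irreducible $\FF_q G$-module $U$ I would project $H$ onto its $U$-isotypic quotient $U^{\,r_G(U)}\rtimes G$ (a genuine quotient of $H$ since $\FF_q[G]$ is semisimple), apply Corollary \ref{matrici} to the induced invariable generators, and obtain
\[
r_G(U)\ \le\ \sum_{i=1}^d \dim_{\FF_q} C_U(g_i)
\]
for every such $U$, where $\End_G(U)=\FF_q$ because $\FF_q$ splits $G$.

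The contradiction then comes from a weighting estimate. Writing $r_G(U)=\dim_{\FF_q}U$, I multiply the displayed inequality by $\dim_{\FF_q}U$ and sum over all nontrivial irreducible $U$. Using that $G$ acts on $\FF_q[G]$ as the regular permutation module, so that $\dim_{\FF_q}C_{\FF_q[G]}(g)=|G|/o(g)$, together with $\sum_U(\dim_{\FF_q}U)^2=|G|$ over all irreducibles (the trivial module accounting for the corrections $-1$), I arrive at
\[
|G|-1\ \le\ \sum_{i=1}^d\Bigl(\tfrac{|G|}{o(g_i)}-1\Bigr)\ \le\ d\Bigl(\tfrac{|G|}{p}-1\Bigr),
\]
the last step using $o(g_i)\ge p$. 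Rearranging gives $d\ge p(|G|-1)/(|G|-p)>p$ whenever $|G|>p$, contradicting $d<p$; hence $d_I(H)>d_I(G)$, the second bullet. The only case escaping $|G|>p$ is $G=C_p$, where $d_I(G)=1$ while $d(H)=2$, so $d_I(H)\ge 2>1$ directly.

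I expect the delicate points to be bookkeeping rather than conceptual: verifying that the $U$-isotypic piece really is a quotient to which Corollary \ref{matrici} applies (in particular that the diagonal-action hypothesis holds for the isotypic component), and checking the two standard identities $\dim_{\FF_q}C_{\FF_q[G]}(g)=|G|/o(g)$ and $\sum_U(\dim_{\FF_q}U)^2=|G|$ over the splitting field $\FF_q$. The genuinely essential step — the heart of the theorem — is the implication that $g_i\ne1$ forces $o(g_i)\ge p$, which converts minimality of the invariable generating set into exactly the numerical slack needed to beat $p$.
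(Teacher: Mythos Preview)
Your proof is correct and follows essentially the same route as the paper: construct $H=C_q\wr G$ for a prime $q$ with $\exp(G)\mid q-1$, apply Corollary~\ref{matrici} to each isotypic quotient, weight by $\dim_{\FF_q}U$ and sum using $\dim_{\FF_q}C_{\FF_qG}(g)=|G|/o(g)$, then exploit $o(g_i)\ge p$. The only notable differences are cosmetic: the paper obtains $d(H)=2$ by citing \cite[Corollary~2.4]{andrea1} rather than unpacking Gasch\"utz's formula, and it includes the trivial module in the sum to get the cleaner inequality $1\le\sum_i 1/o(g_i)\le d/p$ directly, which yields $p\le d$ without a separate treatment of $G=C_p$.
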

\begin{proof} By Dirichlet's theorem on primes in arithmetic progressions, there exists a prime $q$ such that the exponent of $G$ divides $q-1$.
 Let ${\FF}$ be the field of order $q.$
By a result of Brauer (see e.g. \cite[B 5.21]{doerk})
 ${\FF}$ is a splitting field for $G$ so
$$V:={\FF} G= V_1^{n_1}\oplus\cdots \oplus  V_r^{n_r}$$
 where
the $V_j$ are absolutely  irreducible ${\FF} G$-modules no two of which are $G$-isomorphic, and $n_j=\dim_{\FF} V_j.$
Consider the semidirect product $H=V\rtimes G$; note that
$H$ is isomorphic to
$C_q \wr G$ with respect to the regular permutation representation of $G$. By  \cite[Corollary 2.4]{andrea1}, as $C_q$ and $G$ have coprime orders, $d(C_q \wr G)=\max(d(G),d(C_q)+1)=2$.

Clearly $d_I(G)\le d_I(H)$.
 Assume $d_I(G)=d_I(H)=d $.
 By Corollary \ref{matrici} applied to each homomorphic image $V_j^{n_j} \rtimes G$,
  it follows that there exists
an invariable generating set $g_1,\dots,g_d$ of $G$ such that, for any $j$
\begin{equation*}
n_j\leq \sum_{i=1}^d  \dim_{\FF} C_{V_j}(g_i).
\end{equation*}
Multiplying by $n_j$ we get
\begin{equation*}
n_j^2\leq \sum_{i=1}^d  n_j\dim_{\FF} C_{V_j}(g_i).
\end{equation*}
It follows that:
\begin{equation*}
|G|=\sum_{j=1, \ldots, r} n_j^2\leq \sum_{\substack{i=1, \ldots, d\\j=1, \ldots, r}} n_j\dim_{\FF} C_{V_j}(g_i)
=\sum_{i=1, \ldots, d} \dim_{\FF} C_{{\FF}G}(g_i).
\end{equation*}
On the other hand, by Lemma \ref{centraliser} below,
$$\dim_{\FF} C_{{\FF}G}(g_i)=\frac{|G|}{|g_i|}$$
and therefore
\begin{equation*}
1\leq \sum_{i=1}^d \frac{1}{|g_i|}.
\end{equation*}
Since $d=d_I(G)$ we have $g_i\neq 1$ for every $i$, hence $|g_i|\geq p = \min \pi(G).$
 Therefore
\begin{equation*}
1\leq \sum_{i=1}^d  \frac{1}{|g_i|}\leq \frac{d}{p}
\end{equation*}
 which implies that $p\leq d,$ as required.
\end{proof}

\begin{lemma}\label{centraliser}
If $g\in G,$ then $\dim_{\FF} C_{{\FF}G}(g)=|G:\langle g \rangle|.$
\end{lemma}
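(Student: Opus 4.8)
The plan is to read the statement directly off the fact that $\FF G$ is the \emph{regular} module. By construction $V=\FF G$ carries the action of $G$ by (left, say) multiplication, so the natural basis of $\FF G$ indexed by the elements of $G$ is permuted by each $g\in G$. Hence the matrix of $g$ on $\FF G$ is a permutation matrix, and computing $\dim_{\FF}C_{\FF G}(g)$ reduces to a purely combinatorial count of how $\langle g\rangle$ permutes the basis.

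First I would describe this permutation explicitly: $g$ sends the basis vector indexed by $h$ to the one indexed by $gh$, so the orbits of $\langle g\rangle$ on the basis $G$ are exactly the right cosets $\langle g\rangle h$. Because the regular action is free, each such orbit has size $|\langle g\rangle|=|g|$, and the number of orbits is therefore the index $|G:\langle g\rangle|$.

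Next I would argue that the fixed subspace $C_{\FF G}(g)$ has dimension equal to the number of these orbits. For a single orbit $\{v_1,\ldots,v_k\}$ on which $g$ acts as a $k$-cycle, the equation $(g-1)v=0$ forces the coordinates of $v$ to be constant along the orbit, so the invariant vectors supported there form the one-dimensional space spanned by $v_1+\cdots+v_k$; summing over the disjoint orbits shows that $\dim_{\FF}C_{\FF G}(g)$ equals the number of orbits. Combining this with the previous step yields $\dim_{\FF}C_{\FF G}(g)=|G:\langle g\rangle|$.

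I do not anticipate a genuine obstacle here; the only point deserving care is that the identity ``dimension of the fixed space $=$ number of orbits'' must hold over the specific field $\FF$, independently of its characteristic. This is indeed the case: the kernel of a single cyclic permutation block is always spanned by the all-ones vector, regardless of whether $\carr\FF$ divides the orbit length, so no characteristic hypothesis on $\FF$ is needed. Equivalently, one could invoke the orbit-counting lemma, noting that the only element of $\langle g\rangle$ with fixed points on $G$ is the identity, which fixes all $|G|$ points, giving $|G|/|g|$ orbits.
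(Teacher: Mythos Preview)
Your proof is correct and follows essentially the same approach as the paper: both compute the fixed space of $g$ on the regular module by observing that $g$ permutes the natural basis in orbits of size $|g|$ (the cosets of $\langle g\rangle$), and that the orbit sums form a basis of $C_{\FF G}(g)$. The only cosmetic difference is that the paper works with the right action $x\mapsto xg$ (so the orbits are left cosets $t_i\langle g\rangle$ and the explicit basis is $t_i(1+g+\cdots+g^{|g|-1})$), whereas you use the left action; this is immaterial to the count.
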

\begin{proof}
Let  $t_1,\dots,t_r$ be a left transversal of $\langle g \rangle$ in $G$.
Assume that $x\in C_{{\FF}G}(g).$  
As every element of $G$ can be uniquely written in the form
$t_ig^j$, we can write $x=\sum_{i,j} a_{t_ig^j} t_ig^j$, where $a_{t_ig^j}\in \FF$, and, since $xg=x$,
 we have in particular
$$a_{t_ig^j}=a_{t_ig^{j+1}}$$ for every $i$ and $j$.
 Hence $x=\sum_i b_it_i(1+g+\dots+g^{|g|-1}),$ for some $ b_i \in \FF$. Conversely,
 every $\FF$-linear combination of the elements $t_i(1+g+\dots+g^{|g|-1})$ is centralized by $g$.
 In other words
 the elements $t_i(1+g+\dots+g^{|g|-1})$, $1\leq i\leq r$, are a basis for $C_{{\FF}G}(g).$
\end{proof}

\begin{cor}\label{cor}
For every $d\in \mathbb N,$ there exists a finite 2-generated soluble group $G$
with $d_I(G)\geq d.$
\end{cor}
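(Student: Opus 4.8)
The plan is to iterate Theorem \ref{2-gen}, exploiting the fact that the construction there strictly increases $d_I$ while leaving the smallest prime $\min\pi$ unchanged, and to run this until $d_I$ overtakes that fixed prime. First I would fix a prime $p\geq d$ (such a prime exists since the primes are unbounded) and start from a soluble group $G_0$ whose order involves only the prime $p$ and whose invariable generation number is small; the cyclic group $G_0=C_p$ is the cleanest choice, since it is soluble with $d(G_0)\leq 2$, satisfies $\min\pi(G_0)=p$, and has $d_I(G_0)=1$. As the proof of Theorem \ref{2-gen} only uses that $G$ is soluble and generated by at most two elements (via $d(C_q\wr G)=\max(d(G),2)=2$), the theorem is applicable to $G_0$.

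Next I would set up an induction. Suppose that at some stage I have a finite soluble group $G_i$ with $d(G_i)\leq 2$, with $\min\pi(G_i)=p$, and with $d_I(G_i)<p$. Since $d_I(G_i)<p=\min\pi(G_i)$, the first alternative in Theorem \ref{2-gen} cannot hold, so the second one does: there is a finite soluble group $G_{i+1}$ (the group called $H$ in that theorem) with $d(G_{i+1})=2$, with $d_I(G_{i+1})>d_I(G_i)$, and with $\min\pi(G_{i+1})=\min\pi(G_i)=p$. Thus all the running hypotheses are reproduced for $G_{i+1}$, with the sole change that $d_I$ has strictly increased.

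The termination is then immediate. The values $d_I(G_0)<d_I(G_1)<\cdots$ form a strictly increasing sequence of positive integers, so they cannot all stay below $p$. Hence the induction must halt at some stage $k$ precisely because the hypothesis $d_I(G_k)<p$ fails, i.e. $d_I(G_k)\geq p\geq d$; concretely, since $d_I(G_0)=1$ and each step raises $d_I$ by at least one, this happens after at most $p-1$ steps. The group $G_k$ is then a finite soluble group with $d(G_k)\leq 2$ and $d_I(G_k)\geq d$, which is exactly what is required.

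I do not anticipate a genuine obstacle here, since the argument is a clean iteration of the dichotomy already established in Theorem \ref{2-gen}. The only point demanding care is the bookkeeping that forces the process to terminate: one must fix the target prime $p\geq d$ at the outset and rely on the invariance $\min\pi(G_{i+1})=\min\pi(G_i)$ supplied by the theorem, so that $p$ plays a double role, serving both as the lower bound we are aiming to reach and as the fixed ceiling that the strictly increasing quantity $d_I$ is eventually compelled to surpass.
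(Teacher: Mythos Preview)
Your proof is correct and takes essentially the same approach as the paper: both fix a prime $p\geq d$ and repeatedly apply Theorem~\ref{2-gen} to groups with $\min\pi=p$ until $d_I$ reaches $p$. The only difference is cosmetic---the paper phrases this as a contradiction (assume $d_I$ is bounded on the relevant class, take a group realising the maximum, and bump it up), whereas you run the iteration constructively starting from $C_p$; the underlying mechanism is identical.
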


\begin{proof}Let $p$ be a prime number with $d\leq p$ and consider
the set $\Omega_p$ of the finite 2-generated soluble groups
whose
order is divisible by no prime smaller than $p.$ Assume by contradiction,
that $d_I(G)<d$ for every $G \in \Omega_p$
 and let $G^*$ be a group in  $\Omega_p$ such that $d_I(G^*)=\max_{G\in \Omega_p}d_I(G)$.
 Since $d_I(G^*)\le d$ and $d \le p$,
by the Theorem \ref{2-gen} there exists $H$ in $\Omega_p$ with $d_I(G^*)<d_I(H),$ and this contradicts the maximality of $d_I(G^*)$.
\end{proof}

\begin{proof}[Proof of Theorem \ref{main}]
Let $F$ be the $d$-generated free prosoluble group, with $d\geq 2.$ Assume
 that $F$ is FIG.
In particular
 $d_I(H) \le d_I(F)$ for every $2$-generated finite soluble group $H$, but this contradicts  Corollary \ref{cor}.
\end{proof}

\section{Proof of Theorem \ref{sol}}

We need, as a preliminary result, a formula for the minimal number of
generators of a $G$-module.

\begin{lemma}\label{d_G}
Let $G$ be a finite group. Assume that $A$ is a direct product
$$A=A_1^{n_1} \times \cdots \times A_r^{n_r}$$
where, for  each $i$, $A_i$ is a finite elementary abelian $p_i$-group for a prime number $p_i$,
$A_i$ is an irreducible $\FF_{p_i}G$-module and $A_i$ is not $G$-isomorphic to $A_j$ for $i \neq j$.
 Then the minimal number of elements needed to generate $A$ as $G$-module is
$$d_G(A)=\max_{i \in \{1, \ldots, r\}}   \left(\left\lceil \frac{n_{i} }{r_G(A_{i})}
\right\rceil \right),$$
where  $\lceil x \rceil$ denotes the smallest integer greater or equal to $x$.
\end{lemma}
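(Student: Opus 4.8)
The plan is to regard $A$ as a module over the group ring $\Z G$ and to exploit that it is semisimple with a transparent isotypic decomposition. First I would observe that each $A_i$ is a \emph{simple} $\Z G$-module: since $p_iA_i=0$, its $\Z G$-submodules coincide with its $\FF_{p_i}G$-submodules, and $A_i$ is $\FF_{p_i}G$-irreducible by hypothesis. Hence $A=\bigoplus_i A_i^{n_i}$ is a direct sum of simple modules, so it is semisimple, and because the $A_i$ are pairwise non-$G$-isomorphic the summand $A_i^{n_i}$ is precisely the $A_i$-isotypic component of $A$.

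The core structural step is to show that generation decouples across these isotypic components, giving $d_G(A)=\max_i d_G(A_i^{n_i})$. For this I would prove the elementary sublemma that in any semisimple module $M=\bigoplus_i M_i$, with $M_i$ the isotypic components, every submodule $N$ satisfies $N=\bigoplus_i(N\cap M_i)$: writing $N$ as a sum of simple submodules, each such simple summand is $G$-isomorphic to a unique $A_i$ and therefore lies in $M_i$. Applying this to $N=\langle x_1,\dots,x_m\rangle_G=\sum_j \Z G\,x_j$, and using that the projection $\pi_i$ is a module homomorphism, one gets $\pi_i(N)=\sum_j \Z G\,\pi_i(x_j)=N\cap M_i$; thus $x_1,\dots,x_m$ generate $A$ if and only if their projections generate each $A_i^{n_i}$ simultaneously. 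Since $A$ is the honest direct product of the $A_i^{n_i}$, one may prescribe the coordinates of the $x_j$ independently, so these conditions are jointly satisfiable by $m$ elements exactly when each $A_i^{n_i}$ is $m$-generated; this yields the displayed maximum.

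It then remains to compute $d_G(A_i^{n_i})=\lceil n_i/r_G(A_i)\rceil$ for a single isotypic component. Here the action of $\Z G$ factors through $\overline R_i:=\FF_{p_i}G/\mathrm{Ann}(A_i)$, and, setting $E_i:=\End_G(A_i)$ (a finite division ring, hence a field) with $r_G(A_i)=\dim_{E_i}A_i$, the Jacobson density theorem identifies $\overline R_i$ with the full endomorphism ring $\End_{E_i}(A_i)\cong M_{r_G(A_i)}(E_i)$. Over this simple Artinian ring the regular module is isomorphic to $A_i^{r_G(A_i)}$, so a surjection $\overline R_i^{\,m}\twoheadrightarrow A_i^{n_i}$, i.e. $A_i^{m\,r_G(A_i)}\twoheadrightarrow A_i^{n_i}$, exists precisely when $m\,r_G(A_i)\ge n_i$; the least such $m$ is $\lceil n_i/r_G(A_i)\rceil$. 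Combining this with the previous paragraph gives the formula.

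I expect the main obstacle to be the decoupling step: one has to be careful that although generation of the several isotypic components is governed by the \emph{same} tuple $x_1,\dots,x_m$, the freedom to choose the coordinates of each $x_j$ independently in the direct product makes the joint condition equivalent to the componentwise one. The computation in the last paragraph is then routine Wedderburn/density bookkeeping, the only subtlety being that $\FF_{p_i}G$ need not be semisimple, which is precisely why one passes to the quotient $\overline R_i$ by the annihilator.
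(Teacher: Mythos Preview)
Your proof is correct. The paper's own argument is much terser: it records via Wedderburn--Artin that each $A_i$ occurs exactly $r_G(A_i)$ times as a composition factor of $\FF_{p_i}G/J_i$ (with $J_i$ the Jacobson radical), and then invokes \cite[Lemma~7.12]{kg} of Gruenberg to read off the generator count directly. Your argument is in effect a self-contained unpacking of what that citation encodes: you make the decoupling across isotypic components explicit, and then compute $d_G(A_i^{n_i})$ by passing to $\overline R_i=\FF_{p_i}G/\mathrm{Ann}(A_i)$ and using density, which is the same Wedderburn information phrased for one simple factor at a time rather than for the whole semisimple quotient $\FF_{p_i}G/J_i$. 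The paper's route is shorter if one is content to quote Gruenberg; yours has the merit of being self-contained and of isolating the decoupling step cleanly, so that it is transparent why the answer is a maximum over the isotypic components rather than, say, a sum.
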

\begin{proof}
If $J_i$ is the Jacobson radical of $\FF_{p_i}G$, then   $\FF_{p_i}G/J_i$ is semisimple and Artinian, hence we can apply the Wedderburn-Artin theorem (see e.g. \cite[Lemma 1.11, Theorems 1.14 and 3.3]{hungerford})  and we conclude that
 $A_i$  occurs  precisely $ \dim_{\End_{G}(A_i)}(A_i)=r_G(A_{i})$ times in $\FF_{p_i}G/J_i$.
Then, by \cite[Lemma 7.12]{kg},  $A$ can be generated, as $G$-module, by
$$d_G(A)=\max_ {i \in \{1, \ldots, r\}}  \left(\left\lceil \frac{n_{i} }{r_G(A_{i})}\right\rceil \right)$$
elements.
\end{proof}

\begin{prop}\label{le}
Let $G$ be a finite soluble $d$-generated group of derived length $l$.
Then $d_I(G) \le l (d-1)+1$.
\end{prop}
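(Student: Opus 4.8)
The plan is to argue by induction on the derived length $l$. When $l\le 1$ the group $G$ is abelian, conjugation is trivial, and hence $d_I(G)=d(G)\le d=l(d-1)+1$, which disposes of the base case. For the inductive step I set $A=G^{(l-1)}$, the last nontrivial term of the derived series: it is an abelian normal subgroup contained in $G'$ (here $l\ge 2$), and $G/A$ is a $d$-generated soluble group of derived length $l-1$, so by induction $d_I(G/A)\le (l-1)(d-1)+1$. Everything then reduces to the single lifting estimate $d_I(G)\le d_I(G/A)+(d-1)$, since combining it with the inductive bound gives $d_I(G)\le (l-1)(d-1)+1+(d-1)=l(d-1)+1$.

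To prove this estimate I would fix an invariable generating set $\overline g_1,\dots,\overline g_t$ of $G/A$ with $t=d_I(G/A)$, take lifts $g_1,\dots,g_t\in G$, and adjoin at most $d-1$ further elements of $A$. To check invariable generation of $G$ I climb a chief series of $G$ refining $A$, and separate its chief factors into two kinds. A \emph{non-complemented} chief factor $H/K$ inside $A$ satisfies $H/K\le \frat(G/K)$, and a Frattini argument shows that any invariable generating set of $G/H$ lifts to one of $G/K$ at no extra cost; so these factors are free. For a \emph{complemented} chief factor $V$ inside $A$ the decisive observation is that $V$ must be a \emph{nontrivial} $G$-module: a trivial (central) chief factor lying in $G'$ can never be complemented, since $G=MV$ with $[M,V]=1$ forces $G'=M'\le M$ and hence $V\le G'\cap M=1$. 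This is exactly where $A\le G'$ is used, and it matches the fact (Lemma \ref{H1}) that $\h(G,V)=0$ for nontrivial irreducible $V$.

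It remains to cover the complemented (nontrivial) layers, and here the lifting criterion of Proposition \ref{matrici1} (with Lemma \ref{H1}, passing to $G/C_G(V)$ as in Corollary \ref{matrici}) applies: at each isotypic layer $V$ one needs the multiplicity $m_V$ of $V$ to satisfy $m_V\le\sum \dim_{\End_G(V)}C_V(\cdot)$ over the chosen generators. Since $A$ is abelian it centralises every one of its own chief factors, so each adjoined element of $A$ contributes the full $r_G(V)$ to the right-hand side. Thus $d-1$ elements of $A$ supply capacity $(d-1)\,r_G(V)$, and it suffices to bound $m_V\le(d-1)\,r_G(V)$. This follows from Gasch\"utz's formula (Proposition \ref{gen}): the complemented occurrences of $V$ inside $A$ are counted among the $\delta_G(V)$ complemented factors of $G$, and since $\theta_G(V)=1$ we get
\begin{equation*}
1+\left\lceil \frac{\delta_G(V)}{r_G(V)}\right\rceil\le d(G)\le d,
\end{equation*}
whence $m_V\le\delta_G(V)\le(d-1)\,r_G(V)$. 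By Lemma \ref{d_G} the complemented part of $A$ is therefore generated, as a $G$-module, by at most $d-1$ elements, which are the elements I adjoin.

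I expect the main obstacle to be the bookkeeping that welds these pieces together: one must produce a \emph{single} choice of $d-1$ elements of $A$ that simultaneously satisfies the covering inequality at every complemented isotypic layer as one climbs a chief series of the possibly non-semisimple module $A$, rather than resolving each layer with independently chosen components. Formalising the iterated application of Proposition \ref{matrici1} up the chief series, while verifying rigorously that the non-complemented (Frattini-type) factors really do lift for free and never consume any of the budget of $d-1$, is the technical crux; the numerical input from Gasch\"utz and the triviality-versus-complementation dichotomy are, by contrast, the easy part.
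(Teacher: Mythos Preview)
Your approach is essentially the paper's: induct on $l$, use $A=G^{(l-1)}\le G'$ to see that every complemented chief factor inside $A$ is a nontrivial $G$-module, invoke Gasch\"utz to get $d_G(A)\le d-1$, and adjoin $d-1$ $G$-module generators of $A$ to a lifted invariable generating set of $G/A$. The ``bookkeeping obstacle'' you flag evaporates in the paper via two simplifications: first pass to $G/\frat(G)$ (so $A$ is semisimple and there are no non-complemented layers to handle), and then note directly---without any appeal to Proposition~\ref{matrici1}---that if $a_1,\dots,a_{d-1}$ generate $A$ as a $G$-module and $H=\langle g_1^{x_1},\dots,g_t^{x_t}\rangle$ satisfies $HA=G$, then the $H$-normal closure of $\{a_j^{y_j}\}$ equals the $G$-normal closure of $\{a_j\}$ (since $A$ is abelian), which is $A$, so $H=G$.
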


\begin{proof}
The proof is by induction on $l$.
If $l=1$, then $G$ is abelian and $d_I(G)=d(G)\le d= 1(d-1)+1$.

Assume $l >1$ and let $A$ be the last non-trivial term of the derived series of $G$.
Then $dl(G/A)=l-1$.
 Since $d_I(G)=d_I(G/\frat(G))$, without loss of generality we can assume $\frat(G)=1$.
Then $A$ is a direct product of complemented minimal normal subgroups of $G$ and we can write
$$A=A_1^{n_1} \times \cdots \times A_r^{n_r}$$
where each $A_i$ is an elementary abelian $p_i$-group, for a prime number $p_i$,
$A_i$ is an irreducible $\FF_{p_i}G$-module and $A_i$ is not $G$-isomorphic to $A_j$ for $i \neq j$.
Therefore by Lemma \ref{d_G}
\begin{equation}\label{d_G(A)} d_G(A)=\max_{i \in \{1, \ldots, r\}}   \left(\left\lceil \frac{n_{i} }{r_G(A_i)}
\right\rceil \right).
\end{equation}
On the other hand, by Proposition \ref{gen},
\begin{equation}\label{d}
d \ge d(G)= \max_{V} \left( \theta_G(V) +\left\lceil \frac{\delta_G(V) }{r_G(V)}\right\rceil \right)
\end{equation}
where  $V$ ranges over the set of non $G$-isomorphic complemented chief factors of $G$.
 Note that $\theta_G(A_i)=1$ for every $i$. Indeed, if we assume
 that  $A_i$ is a trivial $G$-module,
 then, as $\frat(G)=1$, we have $G=A_i \times H$ for a complement $H$ of $A_i$ in $G$. Hence $G'=H'$ and $G'$ does not contain $A_i$, 
 contradicting 
the fact that $A_i$ is a subgroup of the  last term of the derived series of $G$.

Since $n_{i} \le \delta_G(A_{i})$, by equations \ref{d_G(A)} and \ref{d}
we deduce that
$$ d \ge   \max_{i \in \{1, \ldots, r\}}     \left( 1+ \left\lceil  \frac{n_{i}  }{r_G(A_i)}\right\rceil \right)
 =1 + d_G(A)$$
 hence $d_G(A) \le d-1$.
 Let $a_{1}, \ldots , a_{d-1}$ be a set of generators for $A$ as
 $G$-module
 and let  $g_1, \ldots, g_{t}$ be invariable generators for $G$ modulo $A$ with $t=d_I(G/A)$.
Then it  is straightforward to check that
 the the elements
\[g_1, \ldots, g_{t}, a_{1}, \ldots , a_{d-1} \]
 invariably generate $G$, hence
$$d_I(G) \le t+(d-1)= d_I(G/A)+(d-1).$$
Since  $dl(G/A) = l-1$,  by inductive hypothesis we have that $$d_I(G/A) \le (l-1)(d-1)+1,$$
and we conclude that  $$d_I(G) \le (l-1)(d-1)+1 +(d-1)=l(d-1)+1,$$ as required.
\end{proof}

Denote by $dl(G)$ the derived length of a soluble group $G.$ It follows from the previous proposition, that if $G$ is a finitely generated solvable profinite group, then $d_I(G) \le dl(G)(d(G)-1)+1.$ In order to complete the proof of Theorem
\ref{sol} it suffices to prove the following result:

\begin{thm} Let $d$ be a positive integer and let $p$ be a prime number. For every positive integer $l <\frac{p-1}{d-1}+1$ there
exists a finite soluble group $G_l$ such that
\begin{itemize}
\item $p= \min{\pi(G_l)}$,
\item $dl(G_l)=l$,
\item $d(G_l)=d$,
\item $d_I(G_l)=l(d-1)+1 $.
\end{itemize}
\end{thm}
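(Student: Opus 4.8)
The plan is to prove the theorem by induction on $l$, constructing the groups $G_l$ so that each is obtained from $G_{l-1}$ by a single module extension that raises the derived length by one and the invariable rank by exactly $d-1$. Since Proposition \ref{le} already gives $d_I(G_l)\le l(d-1)+1$ as soon as $d(G_l)=d$ and $dl(G_l)=l$, the entire content is to realise the matching lower bound. For the base case $l=1$ I would take $G_1=C_p^{\,d}$: it is abelian, so $d_I(G_1)=d(G_1)=d=1\cdot(d-1)+1$, with $\min\pi(G_1)=p$ and $dl(G_1)=1$; here the hypothesis $l<\frac{p-1}{d-1}+1$ is vacuous.

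For the inductive step, assume $G_{l-1}$ has been built with the four required properties; the hypothesis $l<\frac{p-1}{d-1}+1$ is precisely the inequality $t<p$, where $t:=d_I(G_{l-1})=(l-1)(d-1)+1$. I would fix, as in Theorem \ref{2-gen}, a prime $q>p$ for which $\FF_q$ is a splitting field for $G_{l-1}$ (Dirichlet together with Brauer's theorem), choose an absolutely irreducible $\FF_q G_{l-1}$-module $V$, and set $G_l:=V^u\rtimes G_{l-1}$ with diagonal action and $u:=(d-1)\dim_{\FF_q}V$. The three side conditions are then checked directly: $\min\pi(G_l)=p$ because $q>p$ and $p\mid|G_{l-1}|$; $d(G_l)=d$ because, by Proposition \ref{gen} and Lemma \ref{d_G}, the factor $V$ is complemented, nontrivial ($\theta_{G_l}(V)=1$) and occurs with $d_G(V^u)=\lceil u/\dim_{\FF_q}V\rceil=d-1$, while every other chief factor contributes at most $d(G_{l-1})=d$; and $dl(G_l)=l$ provided $V$ is chosen so that $G_{l-1}^{(l-2)}$ acts nontrivially, which makes $V^u$ the last nontrivial term of the derived series.

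The heart of the argument is the lower bound $d_I(G_l)\ge t+(d-1)$. I would apply Corollary \ref{matrici} to $G_l=V^u\rtimes G_{l-1}$: any invariable generating set of size $d'$ has the form $x_i=v_ig_i$ with $v_i\in V^u$, $g_i\in G_{l-1}$, the $g_i$ invariably generate $G_{l-1}$, and
\[
u\ \le\ \sum_{i=1}^{d'}\dim_{\FF_q}C_V(g_i).
\]
The decisive property demanded of $V$ is the uniform bound $\dim_{\FF_q}C_V(g)\le \dim_{\FF_q}V/p$ for every $g$ acting nontrivially on $V$. Granting this, let $s$ be the number of $g_i$ acting nontrivially; since those $g_i$ invariably generate $G_{l-1}$ we have $s\ge t$, and splitting the sum into its $s$ nontrivial terms (each $\le \dim_{\FF_q}V/p$) and its $d'-s$ trivial terms (each $=\dim_{\FF_q}V$) gives $u/\dim_{\FF_q}V\le d'-s(p-1)/p$. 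With $u=(d-1)\dim_{\FF_q}V$ this reads $d'\ge (d-1)+s\,(p-1)/p\ge(d-1)+t\,(p-1)/p$. Now the hypothesis $t<p$ forces $t(p-1)/p>t-1$, so the right-hand side exceeds $(d-1)+(t-1)$; as $d'$ is an integer we conclude $d'\ge (d-1)+t=l(d-1)+1$. Together with the upper bound this yields $d_I(G_l)=l(d-1)+1$, completing the induction.

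The step I expect to be the real obstacle is the construction of the module $V$ realising the fixed-space bound $\dim_{\FF_q}C_V(g)\le \dim_{\FF_q}V/p$ while keeping the other invariants under control. The natural way to secure this bound is to take $V$ of regular or permutation type with respect to the $p$-elements of $G_{l-1}$, so that $\dim_{\FF_q}C_V(g)=\dim_{\FF_q}V/|g|\le\dim_{\FF_q}V/p$ whenever $p\mid|g|$; arranging it for every nontrivial $g$, and verifying that the elements appearing in minimal invariable generating sets are $p$-singular, is delicate. A subtler point is that $u=(d-1)\dim_{\FF_q}V$ forces multiplicity at least two, hence a non-cyclic socle, so for $l\ge3$ the module $V$ cannot be faithful: one must replace ``nontrivial on $V$'' throughout by ``nontrivial modulo $C_{G_{l-1}}(V)$'' and ensure that passing to the faithful quotient $G_{l-1}/C_{G_{l-1}}(V)$ does not lower the invariable rank below $t$ (for instance by arranging $C_{G_{l-1}}(V)\le\frat(G_{l-1})$). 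Reconciling the fixed-space estimate, the depth needed for $dl(G_l)=l$, and this control of the centraliser is where the main work of the proof lies.
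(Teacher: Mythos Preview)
Your overall architecture---induction on $l$ with base $G_1=C_p^{\,d}$, a module extension raising both $dl$ and $d_I$, Corollary~\ref{matrici} for the lower bound, and the arithmetic using the hypothesis $t<p$---is exactly the paper's. The one substantive divergence is your choice of module: you extend by a \emph{single} absolutely irreducible $V$ taken with multiplicity $u=(d-1)\dim_{\FF_q}V$, whereas the paper extends by the \emph{regular module} $\FF_qG_{l-1}$ with multiplicity $d-1$, i.e.\ $G_l\cong C_q^{\,d-1}\wr G_{l-1}$. This single change dissolves every obstacle you flag in your last paragraph.

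Concretely: with the regular module, Lemma~\ref{centraliser} gives the exact formula $\dim_{\FF_q}C_{\FF_qG_{l-1}}(g)=|G_{l-1}|/|g|$, so the fixed-space bound $\le |G_{l-1}|/p$ holds for \emph{every} nontrivial $g$, with no need to restrict to $p$-singular elements. The regular representation is faithful, so ``$g$ nontrivial on $V$'' coincides with ``$g\neq 1$'', and the $C_{G_{l-1}}(V)$ issue never arises; faithfulness also makes $dl(G_l)=l$ automatic. Finally $d(G_l)=d$ comes directly from the coprime wreath-product formula $d(C_q^{\,d-1}\wr G_{l-1})=\max(d(G_{l-1}),d-1+1)$, bypassing the chief-factor bookkeeping. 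To run Corollary~\ref{matrici} in this setting the paper applies it to each isotypic piece $V_j^{(d-1)n_j}$ separately and then sums, weighted by $n_j=\dim V_j$, to reassemble $(d-1)|G_{l-1}|\le\sum_i\dim_{\FF_q}C_{\FF_qG_{l-1}}(g_i)$; your counting argument then goes through verbatim. In short, the gap in your proposal is precisely the unresolved construction of $V$, and the missing idea is to stop looking for a clever irreducible and take the whole group algebra instead.
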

\begin{proof} We prove the theorem by induction on $l.$ If $l=1$, then
we can take $G_1=C_p^d.$ So suppose that
 a group $G_l$, with the desired properties,
 has been constructed for
 $l < \frac{p-1}{d-1}.$ As in the proof of Theorem \ref{2-gen}, if we take a prime $q$ such that the exponent of $G_l$ divides $q-1$
and we consider the field ${\FF}$ be the field of order $q,$ then
$$V:={\FF} G_l= V_1^{n_1}\oplus\cdots \oplus  V_r^{n_r}$$ where 
 the $V_j$ are absolutely  irreducible ${\FF} G$-modules no two of which are $G$-isomorphic, 
 and $n_j=\dim_{\FF} V_j.$ Consider the semidirect product $G_{l+1}=V^{d-1}\rtimes G_l$. It can be easily seen that $dl(G_{l+1})=dl(G_l)+1=l+1$ and
that $G_{l+1}$ is isomorphic to the wreath product
$C_q^{d-1} \wr G_l$ with respect to the regular permutation representation of $G_l$. In particular, by  \cite[Corollary 2.4]{andrea1}, as $C_q^{d-1}$ and $G_l$ have coprime orders, $$d(G_{l+1})=d(C_q^{d-1} \wr G_l)=\max(d(G_l),d(C_q^{d-1})+1))=d.$$
Now let $t=d_I(G_{l+1})$ and suppose that $w_1g_1,\dots,w_tg_t$, with $w_i \in V^{d-1}$ and $g_i \in G_l$, invariably
generate $G_{l+1}.$ By Corollary \ref{matrici}, for any $j\in \{1,\dots,t\}$
\begin{equation*}
(d-1)n_j\leq \sum_{i=1}^t \dim_{\FF} C_{V_j}(g_i).
\end{equation*}
As in the proof of Theorem \ref{2-gen}, this implies
\begin{equation}\label{tredi}
d-1\leq \sum_{i=1}^t\frac{\dim_{\FF} C_{{\FF}G_l}(g_i)}{|G_l|}.
\end{equation}
Notice that $g_1,\dots,g_t$ must invariably generate $G_l$ so
$t \ge d_I(G_l)=l(d-1)+1$
  and
  in particular
  we may assume $g_i\neq 1$ for every
$i\leq l(d-1)+1.$
Therefore,
by Lemma \ref{centraliser},
$$\frac{\dim_{\FF} C_{{\FF}G_l}(g_i)}{|G_l|}\leq \frac{1}{p} \quad \text { if $i\leq l(d-1)+1$}.$$
Since the trivial bound ${\dim_{\FF} C_{{\FF}G_l}(g_i)}/{|G_l|}\leq 1$ holds for all 
 $i= l(d-1)+2, \ldots , t$,
 it follows from (\ref{tredi}) that
\begin{equation*}
d-1\leq \frac{l(d-1)+1}{p}+t-l(d-1)-1
\end{equation*}
i.e.
\begin{equation*}
t\geq \left\lceil(l+1)(d-1)+1-\frac{l(d-1)+1}{p}\right\rceil.
\end{equation*}
Since we are assuming $l<\frac{p-1}{d-1},$ we have $\frac{l(d-1)+1}{p}<1$
and consequently $d_I(G_{l+1})=t\geq (l+1)(d-1)+1.$ On the other hand, since $dl(G_l)= l+1$, by Proposition \ref{le}
 we have $d_I(G_{l+1})\leq (l+1)(d-1)+1$ and therefore the equality $d_I(G_{l+1})=(l+1)(d-1)+1$ has been proved.
\end{proof}

\section{Proof of Theorem \ref{pro-supersoluble}}

\begin{prop}\label{geq}
For every $d\in \mathbb N$ there exists a finite supersoluble group $G$ such that $d(G)=d$ and
$d_I(G)\geq 2d-1.$
\end{prop}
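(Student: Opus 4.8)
The goal is to construct, for each $d$, a finite supersoluble group $G$ with $d(G)=d$ and $d_I(G)\geq 2d-1$. The natural strategy mirrors the constructions in Theorems \ref{2-gen} and \ref{sol}: build $G$ as a semidirect product $V^u \rtimes H$ where $H$ is a suitably chosen supersoluble base group and $V$ is an irreducible module, then apply Corollary \ref{matrici} to force a lower bound on $d_I$. Since we already have the upper bound $d_I(F) \leq 2d-1$ for the free prosupersoluble group (from Theorem \ref{sol} applied to the metabelian quotient $G/\frat(G)$, as noted in the introduction), it suffices to realize the lower bound $2d-1$ with a single explicit supersoluble example.

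First I would set up the base. The essential feature of supersolubility is that every chief factor is cyclic of prime order, so each irreducible module $V$ in play satisfies $r_G(V)=\dim_{\End_G(V)} V = 1$. This means that in the generation formula (Proposition \ref{gen}) the ceiling term $\lceil \delta_G(V)/r_G(V)\rceil$ equals $\delta_G(V)$ itself, giving the strongest possible contrast between ordinary generation and invariable generation. The plan is to take $G = V^{d-1} \rtimes H$ where $H$ is chosen so that $d(H)=d$ (or just large enough), $H$ is supersoluble, and $V$ is a faithful one-dimensional module over $\FF_p$ on which $H$ acts by scalars of order dividing $p-1$. Choosing a cyclic group $H$ acting faithfully and irreducibly on each copy of $V$, one arranges via \cite[Corollary 2.4]{andrea1} that $d(G)=d$ stays controlled while the module structure is forced to be scalar.

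Next I would run the same centralizer estimate used in Theorem \ref{2-gen}. Supposing $x_1,\dots,x_t$ invariably generate $G$ with $t=d_I(G)$ and $x_i = w_i g_i$, Corollary \ref{matrici} yields, for the module $V$,
\begin{equation*}
(d-1)\leq \sum_{i=1}^{t} \dim_{\FF}C_V(g_i).
\end{equation*}
Because $V$ is one-dimensional and the action is by nontrivial scalars, $\dim_{\FF} C_V(g_i) \in \{0,1\}$, and it equals $1$ precisely when $g_i$ acts trivially on $V$. The images $g_1,\dots,g_t$ must invariably generate $H$, and the key combinatorial point is to arrange $H$ so that any invariable generating set must contain at least $d$ elements acting nontrivially on $V$ (contributing $0$ to the sum) together with the constraint that the remaining contributions can cover at most $d-1$; balancing these two requirements should pin $t$ down to at least $2d-1$.

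The main obstacle will be the simultaneous control of the two competing demands: keeping $d(G)=d$ exactly (which limits how large and how module-rich the base $H$ can be) while forcing every invariable generating set to waste roughly $d$ generators on covering the ``invariable'' obstructions in $H$ before the module $V^{d-1}$ can even be reached. I expect the cleanest route is to take $H$ abelian — for instance a product of cyclic groups whose invariable generation number already equals its generation number $d$ — so that $d_I(H)=d(H)=d$, and then show the module layer $V^{d-1}$ forces exactly $d-1$ additional invariable generators, giving $d + (d-1) = 2d-1$. Verifying that no fewer than $d-1$ extra elements suffice, i.e. that the generators $g_i$ invariably generating $H$ cannot simultaneously supply enough centralizer dimension in $V$, is where the argument must be carried out carefully using the scalar action and the bound above.
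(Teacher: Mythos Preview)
Your strategy has a genuine gap: the single-module construction $G=V^{d-1}\rtimes H$ does \emph{not} give $d_I(G)\geq 2d-1$. In fact, for the most natural choice --- $H=C_2^d$ acting on $V=\FF_3$ through one fixed epimorphism $\sigma:H\to C_2$ --- one has $d_I(G)=d$. To see this, let $M=\ker\sigma$, pick a basis $h_1,\dots,h_{d-1}$ of $M$ and any $h_d\notin M$, and choose $v_1,\dots,v_{d-1}$ spanning $V^{d-1}$. The elements $v_ih_i$ for $i\leq d-1$ are \emph{central} in $G$ (since $M$ acts trivially on $V$), so their conjugates are themselves; moreover $(v_ih_i)^3=h_i$ and $(v_ih_i)^4=v_i$, whence $\langle v_1h_1,\dots,v_{d-1}h_{d-1}\rangle=V^{d-1}\times M$. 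Any conjugate of $h_d$ has the form $wh_d$ with $w\in V^{d-1}$, and together with $V^{d-1}\times M$ this always generates $G$. Thus $d$ elements invariably generate $G$, contradicting the bound you want.

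The failure is precisely the obstacle you flagged but did not resolve: with only one module $V$, the $d$ elements projecting to a basis of $H$ can be chosen so that all but one lie in $\ker\sigma$ and hence centralize $V$, contributing $d-1$ to $\sum_i\dim C_V(g_i)$ already. The paper's fix is to use not one module but \emph{all} $2^d-1$ one-dimensional $K$-modules $V_1,\dots,V_{2^d-1}$ (one for each hyperplane $M_j$ of $K=C_2^d$), setting $G=\bigl(\prod_j V_j^{d-1}\bigr)\rtimes K$. The point is then purely combinatorial: given any basis $k_1,\dots,k_d$ of $K$, the hyperplane $M_j=\langle k_1k_2^{-1},\dots,k_{d-1}k_d^{-1}\rangle$ contains \emph{none} of the $k_i$, so on the corresponding module $V_j$ all $d$ of them act nontrivially and contribute $0$ to the centralizer sum. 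Corollary~\ref{matrici} applied to $V_j^{d-1}$ then forces $d-1\leq r-d$, i.e.\ $r\geq 2d-1$. Your outline is missing exactly this ``use every hyperplane'' idea.
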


\begin{proof}Let $K=C_2^d.$ There are $\alpha:=2^d-1$ different epimorphisms $\sigma_1,\dots,\sigma_\alpha$ from
$K$ to $C_2$ ($\sigma_i: K\to C_2$ is uniquely determined by $M_i=\ker \sigma_i,$ a $(d-1)$-dimensional subspace of $K$).
To any $i,$ there corresponds a $K$-module $V_i$ defined as follows: $V_i\cong C_3$ and $v_i^k=v_i$ if $k\in M_i$,
$v_i^k=v_i^2$ otherwise. Let $W_i=V_i^{d-1}$ and consider $G=\left(\prod_{1\leq i\leq \alpha} W_i\right)\rtimes K.$
The group $G$ is supersoluble and, by Proposition \ref{gen}, it is easy to see that   $d(G)=d$. Now assume that $g_1,\dots,g_r$ invariably generate $G.$
We write $g_i=(w_{i1},\dots,w_{i\alpha})k_i$ with $k_i \in K$ and $w_{ij}\in W_j.$ In particular $k_1,\dots,k_r$ generate $K$ and, up to reordering the elements
 $g_1,\dots,g_r,$ we can assume that the first $d$-elements  $ k_1,\dots,k_d$ are  a basis for $K.$
Let $M=\langle k_1^{-1}k_2, \dots, k_{d-1}^{-1}k_d\rangle.$ It can be easily checked that $M$ is a maximal
subgroup of $K,$ so $M=M_j$ for some $j\in\{1,\dots,\alpha\}.$ Moreover $k_i\notin M_j$ for
every $i\in\{1,\dots,d\},$ in particular $C_{V_j}(k_i)=0$ for every $i\in\{1,\dots,d\}.$ On the other hand
$w_{1j}k_1,\dots,w_{rj}k_r$ invariably generate $G$, so, by Corollary \ref{matrici},
$$d-1\leq \sum_{1\leq i \leq r}\dim_{\mathbb{F}_3} C_{V_j}(k_i)=\sum_{d+1\leq i \leq r}\dim _{\mathbb{F}_3}C_{V_j}(k_i)\leq r-d.$$
Hence $r\geq 2d-1.$
\end{proof}

 \begin{proof}[Proof of Theorem \ref{pro-supersoluble}]
Let $F$ be the free prosupersoluble group of rank $d \ge 2$.
By Proposition \ref{geq}, there exists a  finite supersoluble $d$-generated group $G$ such that
$d_I(G)\geq 2d-1.$ Hence $d_I(F) \geq 2d-1$.

To prove the converse,
since $d_I(F)=d_I(F/\frat(F))$, it suffices to consider $G=F/\frat F.$
By \cite[Proposition 3.3]{ribesprosupersolv},  $G'$ is abelian  hence $dl(G)\le 2$ and
 it follows from Proposition \ref{le} that $d_I(G) \le 2d-1$. Therefore $d_I(F) = 2d-1$.
\end{proof}

\bibliographystyle{amsplain}

\end{document}